\documentclass[12pt]{amsart}
\usepackage{amsmath,amssymb,amsthm,verbatim,hyperref,xcolor}

\DeclareMathOperator{\val}{val}

\DeclareFontFamily{U}{wncy}{}
    \DeclareFontShape{U}{wncy}{m}{n}{<->wncyr10}{}
    \DeclareSymbolFont{mcy}{U}{wncy}{m}{n}
    \DeclareMathSymbol{\Sha}{\mathord}{mcy}{"58} 

\begin{document}
\newtheorem{thm}{Theorem}[section]
\newtheorem{lem}[thm]{Lemma}
\newtheorem{dfn}[thm]{Definition}
\newtheorem{propdefn}[thm]{Proposition-Definition}
\newtheorem{cor}[thm]{Corollary}
\newtheorem{conj}[thm]{Conjecture}
\newtheorem{clm}[thm]{Claim}
\theoremstyle{remark}
\newtheorem{exm}[thm]{Example}
\newtheorem{rem}[thm]{Remark}
\def\N{{\mathbb N}}
\def\F{{\mathbb F}}
\def\G{{\mathbb G}}
\def\A{{\mathbb A}}
\def\Q{{\mathbb Q}}
\def\R{{\mathbb R}}
\def\C{{\mathbb C}}
\def\P{{\mathbb P}}
\def\Z{{\mathbb Z}}
\def\v{{\mathbf v}}
\def\p{{\mathfrak p}}
\def\x{{\mathbf x}}
\def\O{{\mathcal O}}
\def\M{{\mathcal M}}
\def\kbar{{\bar{k}}}
\def\tr{\mbox{Tr}}
\def\id{\mbox{id}}
\def\qr#1#2{\left(\frac{#1}{#2}\right)}
\renewcommand{\qedsymbol}{{\tiny $\clubsuit$ \normalsize}}
\def\qed{{\tiny $\clubsuit$ \normalsize}}

\renewcommand{\theenumi}{\alph{enumi}}

\title{Modular curves and bad reduction}

\author{Adam Logan}

\email{adam.m.logan@gmail.com}

\author{David McKinnon}

\email{dmckinnon@uwaterloo.ca}

\begin{abstract}
    We prove results that imply, under various hypotheses, that every elliptic curve over a 
    number field $k$ corresponding to a point on a modular curve has bad reduction at a
    certain prime $\p$ of $\O_k$.
    For example, every elliptic curve with a cyclic torsion subgroup of order 20 defined over 
    $\Q(\sqrt{-11})$ or $\Q(\sqrt{17})$ has bad reduction at all primes lying over $3$.  
    The proofs of these statements are quite different, since $3$ is split in $\Q(\sqrt{-11})$
    and inert in $\Q(\sqrt{17})$.
\end{abstract}

\maketitle

\section{Introduction}

At first glance, the structure of the rational torsion of an elliptic curve and the set of places of bad reduction of that elliptic curve seem to have nothing to do with one another.  And yet, all of the infinitely many isomorphism classes of elliptic curves over $\Q(\sqrt{-11})$ that have a rational subgroup of order 20 have bad reduction at every place lying over $3$.  

The reason why has to do with modularity, and hinges crucially on the fact that the modular curve $X_0(20)$ has genus $1$.  The principles involved apply more widely than the specific scenario from the previous paragraph, but they point at curious yet simple phenomena that connect otherwise disparate features of the curves.

Most of the existing results connecting bad reduction with torsion structure involve imposing restrictions on the torsion based on places of bad reduction.  For example, if an elliptic curve over a number field has additive reduction at a place of characteristic $p$, then the rational torsion of the curve is a finite $p$-group.  (See for example Proposition VII.3.1 from \cite{Si}.)  Work of Yasuda (\cite{Ya1}, \cite{Ya2}) also deduces the absence of certain torsion structures from information on the locus of bad reduction.

In this work, by contrast, we use information about the torsion structure to deduce the existence of places of bad reduction.  We document various settings, both general and specific, in which a feature of the torsion of the curve forces bad reduction at a certain set of places.  Indeed, the reduction is not merely bad, but in fact is not even potentially good.  Our techniques require the torsion feature to correspond to a modular curve of genus one, but they are sometimes applicable to 
the higher genus case as well.  The greatest drawback of the higher genus case, of course, is the finiteness of rational points on a curve of genus at least $2$: for each number field $K$ the
results apply to only finitely many isomorphism classes of elliptic curves over $K$.

\section{Main results}


We begin with an example of how our techniques can be used.  This example readily generalizes.

\begin{thm}\label{rankone}
	Let $k$ be a number field such that the modular curve $C=X_0(20)$ has Mordell-Weil rank $n$, where we choose some cusp of $C$ to be the identity element of $C$.  Assume that the $k$-rational torsion of $C$ has order $6$---that is, extending scalars to $k$ does not increase the torsion.
	
	Let $P_1, \dots, P_n$ be generators of the free part of $C(k)$, and let the $E_i$ be the elliptic curves corresponding to the $P_i$.  Let $\ell$ be a prime of $\O_k$ with $\gcd(\ell,20)=1$.  If every $E$ does not have potentially good reduction at $\ell$, then every $k$-rational elliptic curve with a $k$-rational cyclic torsion subgroup of order 20 does not have potentially good reduction at $\ell$.
\end{thm}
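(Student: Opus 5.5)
Potentially good reduction of the elliptic curve attached to a point of $X_0(20)$ is governed by the $j$-invariant: $E$ has potentially good reduction at $\ell$ if and only if $v_\ell(j(E)) \ge 0$. So I would translate the statement into one about the $j$-map $j\colon C \to \P^1$ and the reduction of points of $C$ modulo $\ell$. Since $\gcd(\ell,20)=1$, the curve $C=X_0(20)$ has good reduction at $\ell$, with smooth model $\mathcal{C}$ over $\O_{k,\ell}$. The moduli interpretation, or equivalently the $q$-expansion of $j$ at each cusp, shows that the polar divisor of $j$ on $\mathcal{C}$ is supported exactly on the cusp sections. Hence for $P \in C(k)$ non-cuspidal, $v_\ell(j(P))<0$ if and only if $P$ reduces modulo $\ell$ to a cusp of $\tilde{C}=\mathcal{C}_{\F_\ell}$. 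Here $\F_\ell$ denotes the residue field, and I would check that the six cusps of $X_0(20)$ remain distinct in characteristic prime to $20$. Also, every $k$-rational elliptic curve with a $k$-rational cyclic subgroup of order $20$ gives a non-cuspidal point of $C(k)$.

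The key structural input is that the cuspidal subgroup is the whole torsion. Over $\Q$, $X_0(20)$ is the elliptic curve $20a$ with $C(\Q)_{\rm tors}\cong \Z/6$, and its six rational cusps are exactly these six torsion points, with a cusp as origin. By hypothesis the $k$-torsion is still of order $6$, so $T := C(k)_{\rm tors}$ is precisely the set of cusps. Reduction $\rho\colon C(k)\to \tilde C(\F_\ell)$ is a group homomorphism, because the identity is a cusp and $\mathcal C$ is a smooth group scheme (the Néron model). The statement ``$E$ attached to $P$ lacks potentially good reduction'' therefore becomes $\rho(P)\in \rho(T)$.

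Now take an arbitrary non-cuspidal $P\in C(k)$ and write $P = t + \sum a_i P_i$ with $t\in T$. The hypothesis says $\rho(P_i)\in\rho(T)$ for each $i$. Since $\rho(T)$ is a subgroup of $\tilde C(\F_\ell)$, it follows that $\rho(P)\in\rho(T)$. So $P$ reduces to a cusp, $v_\ell(j(P))<0$, and the corresponding curve does not have potentially good reduction at $\ell$. The order of the steps is: good reduction of $C$ at $\ell$; cusps equal torsion; $\rho$ is a homomorphism; closure of $\rho(T)$ under the group law; and finally the pole criterion.

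I expect the main obstacle to be the justification that a point reducing to a cusp is equivalent to $j$ having a pole. Concretely, one must ensure that $j$ extends to a morphism $\mathcal C\to\P^1_{\O_{k,\ell}}$ whose pullback of $\infty$ is supported on the cusps, and that there is no degeneration such as cusps colliding mod $\ell$ or a non-cuspidal point reducing onto a cusp without a pole. I would handle this with the Deligne–Rapoport or Katz–Mazur model of $X_0(20)$ over $\Z[1/20]$, on which the cusps form disjoint étale sections and $j$ is finite, so that the valuation argument is clean. A smaller point: even if the curve $E$ depends on choosing a model, $j$ depends only on $P$, and the forgetful map from the moduli of $\Gamma_1$-structures to $X_0(20)$ is harmless.
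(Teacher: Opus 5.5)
Your proposal is correct and follows essentially the same route as the paper: the set of points of $C(k)$ reducing to a cusp modulo $\ell$ is the preimage $\rho^{-1}(\rho(T))$ of a subgroup under the reduction homomorphism. It contains the torsion (which consists exactly of the cusps) and the generators $P_i$, so it is all of $C(k)$, and a pole of $j$ at $\ell$ then rules out potentially good reduction. Your extra care with the smooth model over $\Z[1/20]$ and the polar divisor of $j$ just makes explicit what the paper leaves implicit in the phrase ``meets the locus at infinity modulo $\ell$''.
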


There are many examples to which this theorem applies, of which one ($\Q(\sqrt{-11}$) is described in the introduction.  There are examples in higher rank, however.  For example, let $k = \Q(\sqrt{142})$ and let $\ell$ be a prime above one of $3, 7, 31, 71$.  Then $X_0(20)$ has rank $2$ over $k$ and the generators both give curves whose $j$-invariant is not $\ell$-integral.  There are even examples of rank $3$,  such as $k = \Q(\sqrt{4966}), \ell = \p_{191}$ or $k = \Q(\sqrt{12837}), \ell = \p_{11}$ (ramified in both cases).

\begin{proof}
    The modular curve $C=X_0(20)$ has six cusps and genus $1$.  If we choose one of these cusps to be the identity element in the Mordell-Weil group law, then the other cusps become torsion points.  Indeed, the entire Mordell-Weil group of $C$ over $\Q$ is of order $6$, consisting of exactly these cuspidal points.

The Mordell-Weil group of $C$ over $k$ has rank $n$ and torsion of order~$6$.  
The hypothesis that the $E_i$ do not have potentially good reduction at $\ell$ implies that
the generators $P_i$ of this group meets the locus at infinity modulo $\ell$.  The subset
$S$ of $C(k)$ consisting of points that meet the locus at infinity modulo $\ell$ is closed
under addition and negation and contains $C(\Q)$.  Therefore it is a subgroup; by hypothesis
it contains all the $P_i$ and the torsion subgroup of $C(k)$, so it is all of
$C(k)$.  In particular, every $P \in C(k)$ meets the locus at infinity modulo $\ell$.

It follows that the $j$-invariant of every curve $E$ corresponding to a $P \in C_k$ is congruent
mod $\ell$ to that of a curve corresponding to a cusp, which is $\infty$.  In other words,
$\val_\ell j(E) < 0$, which is equivalent to not having potentially good reduction at $\ell$.
\end{proof}

\begin{thm}\label{general}
	Let $E$ be an elliptic curve with a rational torsion subgroup of order $20$ defined over a number field $k$.  If $3$ is totally split in $k$, then $E$ has bad reduction at any prime lying over $3$.
\end{thm}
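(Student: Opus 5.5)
We argue locally at a prime $\p$ of $\O_k$ above $3$. Since $3$ is totally split in $k$, the completion $k_\p$ is $\Q_3$ and the residue field is $\F_3$. A $k$-rational cyclic subgroup of order $20$ on $E$ gives a point $P \in X_0(20)(k) \subset X_0(20)(\Q_3)$. So the statement reduces to a purely $3$-adic one: for every point of $X_0(20)(\Q_3)$ that is not a cusp, the corresponding elliptic curve has bad reduction over $\Q_3$. This matches the abstract, where the split case $\Q(\sqrt{-11})$ is handled differently from the inert case. In the split case the relevant object is the full $3$-adic point set, not the global Mordell--Weil group as in Theorem~\ref{rankone}.

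The key steps are as follows.

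\textbf{Step 1.} Since $\gcd(3,20)=1$, the curve $C = X_0(20)$ has good reduction at $3$. It is the elliptic curve $20a$ (for instance $y^2 = x^3 + x^2 + 4x + 4$, conductor $20$, with $C(\Q) \cong \Z/6$ consisting of the six cusps).

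\textbf{Step 2.} Count points mod $3$: $\tilde C(\F_3)$ has exactly $6$ points. Indeed $a_3(20a) = -2$, so $\#\tilde C(\F_3) = 3+1+2 = 6$. Since the six rational cusps reduce injectively (torsion of order prime to $3$ injects under reduction), every point of $\tilde C(\F_3)$ is the reduction of a cusp.

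\textbf{Step 3.} Let $P \in C(\Q_3)$. Its reduction lies in $\tilde C(\F_3)$, hence coincides with the reduction of some cusp $c$. The modular interpretation of good reduction of $X_0(20)$ at $3$ (Deligne--Rapoport / Katz--Mazur) says that a point reducing to a cuspidal point of the special fibre corresponds to a generalized elliptic curve whose reduction is a N\'eron polygon. Concretely, $j \colon X_0(20) \to X(1)$ extends over $\Z_3$, and the cusps are exactly the preimages of $j = \infty$ in characteristic $3$ as well. Therefore $j(P) \equiv \infty \pmod 3$, that is, $\val_3 j(E) < 0$. So $E$ has potentially multiplicative reduction at $\p$, hence bad reduction (in fact not even potentially good), just as at the end of the proof of Theorem~\ref{rankone}.

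The main obstacle is the justification in Step 3 that reducing to a cusp modulo $3$ forces $\val_3 j < 0$. This needs the statement that the cuspidal locus of the smooth model of $X_0(20)$ over $\Z_3$ is exactly $j^{-1}(\infty)$ in the special fibre. I would handle it by writing $j$ explicitly as a rational function of the coordinates on the Weierstrass model above (or via the Hauptmodul of $X_0(20)$ in terms of $\eta$-quotients). I would then check directly that the poles of $j$, reduced mod $3$, remain the six $\F_3$-points and that $j$ has no other poles in characteristic $3$, i.e.\ there is no cancellation mod $3$ in the numerator and denominator.

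The point count in Step 2 is a routine finite computation. Given it, Steps 1 and 3 finish the proof.
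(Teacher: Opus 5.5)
Your argument is correct and is essentially the paper's. The residue field at $\p$ is $\F_3$, and the six points of $X_0(20)(\F_3)$ are exactly the reductions of the six rational cusps, so every $k$-point (indeed every $\Q_3$-point) reduces to a cusp and $\val_\p j(E)<0$. Your local formulation avoids the paper's unnecessary remark about generators over a quadratic field, and your justification that $j$ extends to a morphism over $\Z_3$ fills in a step the paper leaves implicit.

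One small correction: the parenthetical ``torsion of order prime to $3$ injects'' does not cover the cusps of order $3$ and $6$. Injectivity of all torsion still holds because $\p$ is unramified over $3$ ($e=1<2=p-1$, so the formal group has no torsion). Alternatively, check directly that $O,(-1,0),(0,\pm 2),(4,\pm 10)$ stay distinct mod $3$.
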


\begin{proof} The modular curve $C=X_0(20)$ has six cusps and genus one.  If we choose one of these cusps to be the identity element in the Mordell-Weil group law, then the other cusps become torsion points.  Indeed, the entire Mordell-Weil group of $C$ over $\Q$ is of order six, consisting of exactly these cuspidal points.

The Mordell-Weil group of $C$ over $k$ is generated by points defined over some quadratic number field $K/\Q$.  Modulo any prime $\ell$ of $k$ lying over $3$, $C$ has six rational points.  Each of these is the reduction of a cusp.  Therefore any $K$-rational point is congruent to some cusp modulo $\ell$, and so every $K$-rational curve parameterized by $C$ has bad reduction at $\ell$.  
\end{proof}

Theorem~\ref{general} explains the example given in the introduction.  The field $K=\Q(\sqrt{-11})$ is split at $3$, and the genus one curve $X_0(20)$ has infinitely many $K$-rational points.  

Let us now state and prove more general results.  After this we will discuss
the problem of constructing examples in which the hypotheses are verified.

\begin{thm}\label{thm:weaker}
  Let $E$ be a modular curve of genus $1$, let $p$ be
  a prime of good reduction for $E$, and let $q$ be a power of $p$.
  Suppose that every $\F_q$-point of $E$
  has the same reduction as a cusp (rational or not).  Let $K$ be a number
  field and $\p$ a place above $p$ with residue field contained in $\F_q$.
  Then every elliptic curve over $K$ represented by a $K$-point of $E$ does not
  have potentially good reduction at $\p$.
\end{thm}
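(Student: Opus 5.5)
The plan is to reduce the statement to a reduction argument on a smooth model of $E$ over the local ring at $\p$, and then use the $j$-map to pass from ``meets a cusp'' to ``non-integral $j$-invariant.''

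\emph{Step 1 (reduction of points).} Since $p$ is a prime of good reduction for $E$, fix a smooth proper model $\mathcal{E}$ of $E$ over $\Z_{(p)}$; its base change to the valuation ring $\O_{K,\p}$ is again smooth and proper. Let $k_\p$ be the residue field at $\p$.

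\begin{enumerate}
\item By the valuative criterion of properness, every $P\in E(K)$ extends to a section of $\mathcal{E}$ over $\O_{K,\p}$.
\item Its reduction $\bar P$ therefore lies in $\mathcal{E}(k_\p)$.
\item Since $k_\p\subseteq \F_q$, we get $\bar P\in \mathcal{E}(\F_q)$.
\end{enumerate}

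By hypothesis, $\bar P=\bar c$ for some cusp $c$ of $E$. Here $c$ may be defined over a finite extension $L$, and we reduce it at a prime $\mathfrak{P}$ of $L$ above $\p$, enlarging $K$ to a compositum if necessary. Such enlargement is harmless: whether a curve has potentially good reduction is unchanged by finite extension of the base field.

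\emph{Step 2 (from cusps to the $j$-invariant).} Consider the forgetful map $j\colon E\to X(1)\cong\P^1$, which sends cusps to $\infty$ and is defined over $\Q$. We want to conclude that $j(P)$ reduces to $\infty$ modulo $\p$, i.e.\ that $\val_\p j(P)<0$.

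\begin{enumerate}
\item View $j$ as a rational function on $\mathcal{E}$ over $\Z_{(p)}$, possibly after discarding finitely many primes. This requires $j$ to extend to a morphism $\mathcal{E}\to\P^1_{\Z_{(p)}}$ near the relevant points.
\item Then reduction commutes with evaluation, giving $\overline{j(P)}=\bar j(\bar P)=\bar j(\bar c)=\infty$.
\end{enumerate}

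Once this holds, the conclusion is formal. The elliptic curve attached to $P$ has $\val_\p j<0$. By the standard criterion (Silverman, Prop.\ VII.5.5), this is equivalent to not having potentially good reduction at $\p$, as was used in the proof of Theorem~\ref{rankone}.

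\emph{Main obstacle.} The delicate point is justifying Step 2(a). The map $j$ extends to a morphism from the normalization of $\mathcal{E}$ to $\P^1$. However, a priori the special fibre of this extension could be degenerate: the reduction of the composite map $\bar j$ need not be the same as the ``mod $p$ $j$-map,'' and the rational map might have indeterminacy on the special fibre.

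My first attempt is to use the moduli interpretation. When $p$ does not divide the level, $X_0(N)$ has good reduction and the coarse moduli scheme over $\Z_{(p)}$ is smooth. Its map to $X(1)_{\Z_{(p)}}$ is a finite morphism, and cusps specialize to cusps, i.e.\ to points over $\infty$ in the fibre.

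Since $\mathcal{E}$ is the minimal regular (here smooth) model, it agrees with this moduli model. Hence $j$ is a genuine finite morphism of $\Z_{(p)}$-schemes and commutes with specialization. If $p$ divides the level but $E$ still has good reduction, I would instead argue directly: write $j=f/g$ with $f,g$ coprime in the local ring of $\mathcal{E}$ at $\bar c$. The polar divisor of $j$ is supported on the cusps, so its closure is a horizontal divisor through $\bar c$. Consequently $g$ vanishes at $\bar P=\bar c$ while $f$ does not, which gives $\val_\p j(P)<0$.
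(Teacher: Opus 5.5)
Your main line is the paper's argument: reduce $P$ modulo $\p$, observe that $\bar P$ is an $\F_q$-point and hence the reduction of a cusp, conclude $\val_\p j(P)<0$, and invoke the criterion for potentially good reduction. The paper asserts the middle implication without comment. Your justification of it when $p$ does not divide the level is correct and fills that gap. The Deligne--Rapoport/Igusa model is smooth, so by uniqueness of smooth proper models of a genus-one curve it coincides with your $\mathcal E$. On that model $j$ is a finite morphism to $\P^1_{\Z_{(p)}}$, so specialization commutes with $j$ and $\overline{j(P)}=\bar j(\bar c)=\infty$.

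The fallback paragraph for $p$ dividing the level, however, does not work. In the two-dimensional regular local ring at $\bar c$, coprimality of $f$ and $g$ rules out a common divisor, but not a common zero at the closed point; think of $x$ and $p$ in $\Z_{(p)}[x]_{(p,x)}$. Knowing that the polar divisor passes through $\bar c$ gives $g(\bar c)=0$. Nothing you have said prevents either of the following:
\begin{enumerate}
\item the closure of the zero divisor of $j$ also passes through $\bar c$;
\item $\operatorname{div}(j)$ contains the special fibre with positive multiplicity, so that $p\mid f$.
\end{enumerate}
In either case $\bar c$ is a point of indeterminacy of $j$ on $\mathcal E$. Then $\val_\p j(P)$ depends on the section $P$ and not only on $\bar P$: with $j=p/x$ and the section $x=p$ through the closed point, one gets $j(P)=1$, which is integral. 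Ruling this out is exactly the statement that $j$ is a morphism near $\bar c$, which is what needed proof. (For the same reason, your remark that $j$ extends to a morphism on the normalization of $\mathcal E$ is not right: $\mathcal E$ is already normal, and a rational map from a normal surface to $\P^1$ can still have indeterminacy points.)

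The clean fix is to drop that paragraph. For the curves the paper has in mind the case is vacuous. A genus-one $X_0(N)$ has conductor exactly $N$: its unique normalized cusp form cannot be old, since an oldform from level $M<N$ would give the independent forms $f(\tau)$ and $f((N/M)\tau)$, so $\dim S_2(\Gamma_0(N))\ge 2$. The curves $X_1(11)$, $X_1(14)$, $X_1(15)$ are isogenous to these. So good reduction at $p$ already forces $p\nmid N$, and your first argument covers everything. For a more general genus-one modular curve, you should add $p\nmid N$ (or that $j$ extends to a morphism on the smooth model) as a hypothesis rather than attempt the local-ring argument.
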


\begin{proof} This is really the same argument as before.  Let $P$ be a
  $K$-point of $E$.  In the reduction of $E(K)$ mod $\p$, the image of
  $P$ must be congruent to a cusp, and so the $j$-invariant of the point
  corresponding to $P$ is not integral at $\p$.  The conclusion follows
  from this.
\end{proof}

In the special case of a quadratic field, we can sometimes do business in
cases where not all points are represented by cusps.

\begin{thm}\label{thm:stronger}
  Let $E$ be a modular elliptic curve as before, let $K$ be a quadratic
  field, let $p$ be a prime inert in $K$, and suppose that every $\F_p$-point
  of $E$ is represented by a cusp and that the map $E \to \P^1$ to the
  $x$-line is surjective on $\F_p$-points.  Let $E^\sigma$ be the twist of
  $E$ corresponding to $K$.  Suppose that the natural map
  $E(\Q) \oplus E^\sigma(\Q) \to E(K)$ is surjective.  Then every elliptic
  curve over $K$ represented by a $K$-point of $E$ does not have potentially
  good reduction at $p$.
\end{thm}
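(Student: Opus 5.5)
Since $p$ is inert in $K$, the residue field of $K$ at $p$ is $\F_{p^2}$. Write $K=\Q(\sqrt d)$ with $d$ a unit at $p$ and take $E$ in the form $y^2=f(x)$. Then $E^\sigma$ is $dy^2=f(x)$, and a point $(x,y)\in E^\sigma(\Q)$ maps to $(x,y\sqrt d)\in E(K)$. The plan is to show that every point of $E(K)$ reduces modulo $p$ to the reduction of a cusp, and then conclude exactly as in the proof of Theorem~\ref{thm:weaker}: the $j$-invariant of the corresponding curve is not $p$-integral, so it does not have potentially good reduction.

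\textbf{Step 1: the images of $E(\Q)$ and $E^\sigma(\Q)$.} Points of $E(\Q)$ reduce to $\F_p$-points of $E$, and by hypothesis these are reductions of cusps. Points of $E^\sigma(\Q)$ reduce to $\F_p$-points of $E^\sigma$, which in $E(\F_{p^2})$ are the points $(x,y)$ with $x\in\F_p$ and $y\in\sqrt d\,\F_p$. Since $d$ is a nonsquare mod $p$, for each $x_0\in\F_p$ the value $f(x_0)$ is either a nonzero square, zero, or a nonsquare. The surjectivity of $E(\F_p)\to\P^1(\F_p)$ on $x$-coordinates says that $f(x_0)$ is always a square in $\F_p$. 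So $E^\sigma(\F_p)$ contains only the points with $y=0$ and the point at infinity, and all of these lie in $E(\F_p)$. Hence $E^\sigma(\F_p)\subseteq E(\F_p)$ inside $E(\F_{p^2})$, and the image of $E^\sigma(\Q)$ also reduces into $E(\F_p)$, that is, to cusps.

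\textbf{Step 2: closure under addition.} By the surjectivity hypothesis, every $P\in E(K)$ can be written as $P=Q+R$ with $Q$ from $E(\Q)$ and $R$ from $E^\sigma(\Q)$. Reduction $E(K)\to E(\F_{p^2})$ is a homomorphism, since $E$ has good reduction at $p$, which is implicit in "as before". Therefore $\bar P=\bar Q+\bar R\in E(\F_p)$, because $E(\F_p)$ is a subgroup. So $\bar P$ equals the reduction of a cusp. As in Theorem~\ref{thm:weaker}, this forces $\val_p j<0$ for the curve represented by $P$.

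\textbf{Main obstacle.} The delicate point is Step 1: interpreting the surjectivity of the $x$-line map correctly and handling the $2$-torsion and infinity cases. One also needs $p$ odd and the model of $E$ to have good reduction with $d$ a $p$-unit, which holds since $p$ is inert, so unramified. If $E$ is not in the form $y^2=f(x)$, I would first change to such a model over $\Z_{(p)}$.
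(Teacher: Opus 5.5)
Your argument follows the paper's route: write $P=Q+R$ with $Q\in E(\Q)$ and $R\in E^\sigma(\Q)$. Surjectivity of the $x$-map on $\F_p$-points forces $\bar R$ to lie over a ramification (Weierstrass) point, so $\bar R\in E(\F_p)$. Then use that $E(\F_p)$ is a subgroup consisting of reductions of cusps. For odd $p$ your Step~1 is a correct and more explicit version of the paper's remark ``this is only possible if $R_0$ is a Weierstrass point mod $p$''.

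\textbf{The gap is the case $p=2$.} You simply declare that ``one also needs $p$ odd'', but this is not a hypothesis of the theorem. The paper actually uses $p=2$: Example~\ref{ex:x1} applies this theorem to $X_1(11)$ and $X_1(15)$ at $p=2$. In characteristic $2$ your Step~1 fails for three reasons:
\begin{itemize}
\item no model $y^2=f(x)$ has good reduction at $2$;
\item the quadratic twist is not $dy^2=f(x)$ but reduces to an Artin--Schreier twist;
\item the square/nonsquare dichotomy has no meaning.
\end{itemize}

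\textbf{The fix is to argue with Frobenius rather than with a model of $E^\sigma$.} View $E^\sigma(\Q)$ as $\{R\in E(K):\sigma R=-R\}$. Since $p$ is inert, $\sigma$ induces the Frobenius $F$ of $\F_{p^2}/\F_p$, and reduction is a homomorphism because $E$ has good reduction. Hence $F(\bar R)=-\bar R$ in $E(\F_{p^2})$. Because $x(-T)=x(T)$, this gives $x(\bar R)\in\P^1(\F_p)$. By the surjectivity hypothesis, $x(\bar R)=x(\bar S)$ for some $\bar S\in E(\F_p)$. The fibres of the degree-$2$ map $x$ are exactly the sets $\{T,-T\}$, so $\bar R=\pm\bar S\in E(\F_p)$. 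This holds for every prime $p$, whatever Weierstrass model is used, and it contains your odd-$p$ computation as a special case. With this replacement your Step~2 goes through unchanged.
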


\begin{proof} Let $P \in E(K)$ and write $P = C + R$, where $C \in E(\Q)$
  and $R \in E^\sigma(\Q)$.  Let $R_0$ be the
  image of $R$ in the $x$-line: then $R_0$ reduced mod $p$ is also the image of
  a $\Q$-point of $E$, since the reduction map is surjective on $\F_p$-points.
  This is only possible if $R_0$ is a Weierstrass point mod $p$, so 
  $R \bmod p \in E(\F_p)$ and hence it is the reduction of a point of $E(\Q)$.
  Thus $P \bmod p$ is the reduction of a point of $E(\Q)$ as well; by hypothesis
  it is the reduction of a cusp.  The argument proceeds as before.
\end{proof}

\begin{exm}
  We now consider the examples of modular curves $X_0(N)$ of genus $1$.
  For $N = 11, 14, 15, 17, 19, 21, 27$ there are non-cuspidal rational points.
  Since the reduction of torsion points at a prime of good reduction is always
  injective, and these curves are of rank $0$, it is not possible for every
  mod $p$ point to be represented by a cusp for any such $p$.  In particular, our techniques do not apply to these curves.

  For the remaining $N = 20, 24, 32, 36, 49$ it is straightforward to check
  the conditions.  We find that the hypotheses of Theorems
  \ref{thm:weaker},~\ref{thm:stronger}
  are satisfied as indicated in Table~\ref{table:x0}.

  \begin{table}[h]
    \caption{Levels and primes for which the hypotheses of our theorems hold for
      $X_0(N)$.}
    \begin{tabular}{|c|c|c|}
      \hline
      $N$&\ref{thm:weaker}&\ref{thm:stronger}\\ \hline
      $20$&3,7&3\\ \hline
      $24$&5,7,11&5\\ \hline
      $32$&3,5&3\\ \hline
      $36$&5,7&---\\ \hline
      $49$&2&---\\ \hline
    \end{tabular}\label{table:x0}
  \end{table}
\end{exm}

So, for example, from the second line of the table we see that any elliptic curve over $K=\Q(\sqrt{10})$ that has a $K$-rational subgroup of order 24 must also have bad reduction at $5$.  This is because $5$ is ramified in $\Q(\sqrt{10})$, allowing the application of Theorem~\ref{thm:weaker}.  Note that this is a statement about infinitely many isomorphism classes of curves, since the rank of $X_0(24)$ over $\Q(\sqrt{10})$ is positive.

\begin{exm}\label{ex:x1}
We now consider the modular curves of the family $X_1(N)$.  We start with the
three curves $X_1(11), X_1(14), X_1(15)$ of genus $1$.  Each of these curves is
isogenous to the respective curve $X_0(N)$, but our results and methods are not
invariant under isogeny because of changes in the cusps.

For $X_1(11)$, there are $10$ cusps, of which $5$ are rational and $5$ are defined
over $\Q(\zeta_7)^+$.  If we reduce $X_1(11) \bmod p$ for $p = 2, 3, 5$, we find
that there are $5$ points, which are necessarily the images of the $5$ cusps.  
Thus Theorem~\ref{thm:weaker} applies to these three cases: we conclude that if
$p \in \{2,3,5\}$ and $K$ is a number field in which every prime above $p$ has
inertial degree $1$, then every elliptic curve over $K$ with a point of order $11$
has bad reduction at all primes above $p$.  The stronger Theorem~\ref{thm:stronger}
applies only for $p = 2$.

The next example is $X_1(14)$, which has $12$ cusps, of which $6$ are rational
and the others fall into two orbits of points defined over ${\Q(\zeta_7)}^+$.
Since an elliptic curve over $\F_3$ or $\F_5$ with a subgroup of order $6$ has no
more points, Theorem~\ref{thm:weaker} applies in both cases, and 
Theorem~\ref{thm:stronger} to $p = 3$.  Similarly, for $X_1(15)$ there are $16$
cusps of which only $4$ are rational.  It follows that Theorem~\ref{thm:stronger}
applies to $p = 2$.  For $p = 7$,  the order of $X_1(15)$ is $8$.  It so happens
that the two cusp orbits of size $2$ have rational points over $\Q(\sqrt{-3})$ and
therefore mod $7$, so every point is represented by a cusp and Theorem~\ref{thm:weaker}
applies.

We briefly remark on $X_1(13)$, which has $12$ cusps of which $6$ are rational and
the other $6$ form a single orbit whose elements are defined over ${\Q(\zeta_{13})}^+$.
For primes up to $11$ the curve has $6, 6, 6, 8, 12$ respectively (it is not possible
for an elliptic curve to have $6$ points over $\F_2$, but $X_1(13)$ has genus $2$).
A straightforward variant of Theorem~\ref{thm:stronger} applies to $2, 3$ and a similar variation on Theorem~\ref{thm:weaker} would apply to $5$.  
\end{exm}
\begin{thm}\label{mostgeneral}
    Let $\Gamma$ be a modular curve of genus $1$ over $\Q$, and let $p$ be a prime.  Let $k$ be a number field.  Let $\{P_1,\ldots,P_r\}$ be a set of generators of the Mordell-Weil group of $\Gamma$ over $k$.  The following are equivalent:
    \begin{enumerate}
        \item For every prime $\p$ of $k$ lying over $p$, there is a cusp $Q$ of $\Gamma$ such that each $P_i$ is congruent to $Q$ modulo $\p$.
        \item For every prime $\p$ of $k$ lying over $p$, there is a cusp $Q$ such that every $k$-rational point $P$ is congruent to $Q$ modulo $\p$.
    \end{enumerate}
\end{thm}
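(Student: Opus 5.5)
The implication (b)$\Rightarrow$(a) is immediate: each $P_i$ is a $k$-rational point, so (b) applied to $P=P_i$ gives (a) with the same cusp $Q$ for every $\p$. All the work is in (a)$\Rightarrow$(b), which I would handle as in the proof of Theorem~\ref{rankone}: make reduction modulo $\p$ into a group homomorphism and use that its kernel is a subgroup.

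\emph{Step 1: the group structure and the reduction map.} Fix a prime $\p$ of $k$ over $p$, and let $\mathbb F$ be a finite extension of the residue field at $\p$ over which the reductions of all cusps are defined. I would take the identity $O$ of the Mordell--Weil law to be a cusp, as in the earlier proofs; since $\Gamma$ is defined over $\Q$ one can take a rational cusp. Reduction is compatible with the chord--tangent law. At a prime of good reduction this is the usual statement. If $p$ is bad, I would use the Néron model and work only with points landing in the smooth locus, or simply add the hypothesis that $p$ is a prime of good reduction, as in Theorem~\ref{thm:weaker}. This gives a homomorphism
$$\rho_\p\colon \Gamma(k)\to \tilde\Gamma(\mathbb F),\qquad \rho_\p(O)=\bar O .$$

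\emph{Step 2: the kernel argument.} Suppose every $\rho_\p(P_i)$ equals $\bar Q$.
\begin{itemize}
\item If $\bar Q=\bar O$, then every $P_i$ lies in $\ker\rho_\p$, a subgroup. Since the $P_i$ generate $\Gamma(k)$, every $P\in\Gamma(k)$ satisfies $\rho_\p(P)=\bar O=\bar Q$, which is (b).
\item In general, the set $S$ of $P\in\Gamma(k)$ whose reduction is the reduction of some cusp is a subgroup. This uses that, with a cusp as origin, the cusps are torsion by Manin--Drinfeld and generate a finite subgroup whose image under $\rho_\p$ is a subgroup of $\tilde\Gamma(\mathbb F)$. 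Hence $S=\Gamma(k)$ as soon as all the $P_i$ lie in $S$.
\end{itemize}

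\emph{Step 3 (main obstacle): obtaining a single cusp $Q$.} This is where the argument needs care. If all $\rho_\p(P_i)=\bar Q$ with $\bar Q\neq\bar O$, then $\rho_\p(P_1-P_1)=\bar O$. So the point $O=P_1-P_1$ reduces to $\bar O$, not $\bar Q$, and $P_1+P_2$ reduces to $2\bar Q$, which need not be $\bar Q$. Thus (b) with a single cusp forces $\bar Q=\bar O$; indeed (b) applied to $P=O$ gives this directly. I would therefore proceed as follows.
\begin{itemize}
\item Observe that (b) always forces $\bar Q=\bar O$.
\item Show that (a) also forces it under the natural reading. This holds if the generating set is understood to contain the identity, or is understood as generating $\Gamma(k)$ as a monoid. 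It also holds whenever some $P_i$ is the origin cusp, or whenever some relation $\sum n_iP_i=O$ with $\sum n_i\neq 1$ holds, which happens automatically if any $P_i$ is torsion of order $m>1$.
\end{itemize}
With $\bar Q=\bar O$ established, the first case of Step 2 finishes the proof.

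If this reduction to $\bar Q=\bar O$ cannot be justified in the stated generality, I would prove the equivalence in a weaker form. Condition (a) would say that each $P_i$ is congruent to \emph{some} cusp, and (b) that every $P$ is congruent to some cusp. This version is exactly the subgroup argument of the second case of Step 2, and it is all that the later applications (non-integrality of $j$ at $\p$) require.
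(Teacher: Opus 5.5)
Your observation that (b), read literally, forces $\bar Q=\bar O$ is correct. The rescue in Step~3, however, is wrong and cannot be repaired. Reducing a relation $\sum n_iP_i=O$ gives only $(\sum n_i)\bar Q=\bar O$, i.e.\ the order of $\bar Q$ divides $\sum n_i$. To get $\bar Q=\bar O$ you would need $\sum n_i=\pm1$, so a torsion generator of order $m>1$ gives nothing, and neither does the monoid reading.

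Here is a concrete counterexample. Take $\Gamma=X_0(20)$, $k=\Q$, $p=3$. Then $\Gamma(\Q)\cong\Z/6\Z$ consists of the six cusps and is generated by a single cusp $T$ of order $6$, so (a) holds with $Q=T$. The paper notes that $X_0(20)$ has exactly six $\F_3$-points, the reductions of the six cusps, so these reductions are distinct. Hence $\bar T\neq\bar O$, and no single cusp is congruent to all six rational points, so (b) fails. Thus (a)$\Rightarrow$(b) is false as literally stated. Indeed, (b) can never hold for $X_0(20)$ at $3$ over any $k$, which is exactly the case the paper cares about.

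The readings under which your Step~3 does succeed (e.g.\ $O$ among the $P_i$) make (b) say $\Gamma(k)\subseteq\ker\rho_\p$. That is impossible as soon as two rational cusps have distinct reductions. So the intended statement must be your fallback, in which the cusp may depend on the point. The paper's own proof is only ``(b)$\Rightarrow$(a) is trivial, the converse is merely easy.'' The argument it has in mind is the subgroup argument from the proof of Theorem~\ref{rankone}, i.e.\ your second bullet of Step~2.

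That bullet has its own gap. Manin--Drinfeld plus the homomorphism property shows that the preimage under $\rho_\p$ of the image of the \emph{cuspidal subgroup} is a subgroup. But $S$ is defined by congruence to an actual \emph{cusp}, and elements of the cuspidal subgroup are in general not cusps; only congruence to a cusp gives non-integral $j$. $S$ is closed under addition only if the set of cusp reductions lying in $\tilde\Gamma(\F_\p)$ is closed under the group law.

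For $X_0(20)$ this holds because the six cusps are exactly $\Gamma(\Q)\cong\Z/6\Z$. This is what the proof of Theorem~\ref{rankone} uses when it says $S$ contains $C(\Q)$ and is closed under addition. It is not automatic, though. The ten cusps of $X_1(11)$ cannot form a subgroup with a rational cusp as origin. A group of order $10$ has a unique element of order $2$, and since the set of cusps is Galois-stable and Galois acts by group automorphisms, that element would be a rational $2$-torsion point. But $X_1(11)(\Q)$ has order $5$.

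So you must add the hypothesis that the cusps, or at least their reductions in $\tilde\Gamma(\F_\p)$, form a subgroup with a cusp as origin. With that hypothesis your fallback argument is a complete proof of the intended statement and coincides with the paper's implicit argument.
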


\begin{proof} The implication $(b)\implies(a)$ is trivial.  The reverse implication is merely easy. \end{proof}

\begin{rem}\label{cuspsandtorsion}
    If $\Gamma$ is a modular curve of genus $1$, and if $\Gamma$ has a rational torsion point $T$ that is not a cusp, then Theorem \ref{mostgeneral} cannot apply because $T$ will be integral with respect to the cusps, which are all torsion.
\end{rem}


\begin{rem}\label{numberofcusps}
    A modular curve of genus $1$ over $\Q$ cannot have more than 16 rational cusps by Mazur's Theorem.  Of course, it could have nonrational cusps that reduce to a rational point modulo $p$.
\end{rem}

\begin{rem}\label{highergenus}
    One can apply a straightforward variation on Theorem~\ref{mostgeneral} to modular curves of higher genus as well, but the number of rational points is much more limited, being a finite set for any given number field $k$.  
\end{rem}

\section{Examples of divisibility}
In this section we give specific examples for Theorem~\ref{thm:stronger}.  In particular,
we have already seen that the hypothesis of surjectivity mod $p$ is satisfied for $E = X_0(20)$,
$p = 7$.  We now give a condition for the map $E(\Q) \oplus E^\sigma(\Q) \to E(K)$ to be 
surjective.  To be precise, we prove the following:

\begin{thm}\label{thm:surjectivity}
Let $q$ be prime and let $K = \Q(\sqrt{q})$.
\begin{enumerate} 
\item If $q \equiv 13, 17 \bmod 20$, then the natural map 
$E(\Q) \oplus E^\sigma(\Q) \to E(K)$ is surjective.  If the $2$-part of $\Sha(E^\sigma)$ is finite, 
then $E(K)$ has positive rank.
\item If $q \equiv 11, 19 \bmod 20$ and the $2$-part of $\Sha(E^\sigma)$ 
is finite, then $E(\Q) \oplus E^\sigma(\Q) \to E(K)$ is not surjective.
\end{enumerate}
\end{thm}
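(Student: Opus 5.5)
The argument rests on the elementary identity $2P=(P+\sigma P)+(P-\sigma P)$ for $P\in E(K)$. Here $\sigma$ generates $\mathrm{Gal}(K/\Q)$, $E=X_0(20)$ is the curve $y^2=(x+1)(x^2+4)$ with $E(\Q)\cong\Z/6$ consisting of the cusps, and the anti-invariant points $P-\sigma P$ are identified with $E^\sigma(\Q)$ in the usual way. Thus the cokernel $M$ of $\phi\colon E(\Q)\oplus E^\sigma(\Q)\to E(K)$ is a finite elementary abelian $2$-group. Moreover $P$ gives a nonzero class in $M$ exactly when $2P=A+B$ with $A\in E(Q)$, $B\in E^\sigma(\Q)$, and $A+B\notin 2\phi(E(\Q)\oplus E^\sigma(\Q))$.

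So I would compare the composite
\[ E(\Q)/2\oplus E^\sigma(\Q)/2 \to E(K)/2E(K) \]
with its kernel. The plan is to make this explicit by $2$-descent with the rational $2$-torsion point $T=(-1,0)$, equivalently the $2$-isogeny $\psi$ with kernel $\langle T\rangle$. Over both $\Q$ and $K$ the $\psi$-descent map is $P\mapsto x(P)+1$ modulo squares, and the dual descent is also explicit. The key point is that $x+1\in\Q^*/\Q^{*2}$ for points of $E$, and $q(x+1)$ for points of $E^\sigma$ written as $qy^2=(x+1)(x^2+4)$, both map to $K^*/K^{*2}$. There the kernel of $\Q^*/\Q^{*2}\to K^*/K^{*2}$ is $\{1,q\}$. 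A nontrivial element of $M$ therefore shows up as a class in the $K$-Selmer group that is $\sigma$-stable but not the image of a $\Q$-class. It can also show up as a collision caused by $q$ becoming a square.

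For part (a), with $q\equiv 13,17 \bmod 20$, I would compute the relevant Selmer groups of $E$, $E^\sigma$ over $\Q$ and of $E$ over $K$, keeping track of local conditions at $2$, $5$ and $q$. The goal is to show
\[ \dim \mathrm{Sel}^{\psi}(E/K)=\dim\mathrm{Sel}^{\psi}(E/\Q)+\dim\mathrm{Sel}^{\psi}(E^\sigma/\Q), \]
together with the analogous statement for the dual isogeny. Given this, an element $P$ with a nontrivial class in $M$ would produce an extra $\sigma$-invariant Selmer class, which is a contradiction. The congruence conditions enter through $q\equiv1\bmod4$, so that $2$ is unramified in $K$, and through $\qr{q}{5}=-1$, which controls the local image at $5$.

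For positive rank I would use the root number. We have $w(E)=+1$, since the rank is $0$, and so
\[ w(E^\sigma)=w(E)\,\chi_q(-20)=\qr{-5}{q}=\qr{q}{5}=-1 \]
for $q\equiv 2,3\bmod 5$. The $2$-parity theorem (Monsky/Dokchitser) then gives odd $2$-Selmer rank. If $\Sha(E^\sigma)[2^\infty]$ is finite, the rank of $E^\sigma(\Q)$ is odd, and $\operatorname{rank}E(K)=\operatorname{rank}E^\sigma(\Q)>0$.

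For part (b), with $q\equiv 11,19\bmod 20$, I would exhibit the obstruction directly. Here $q\equiv3\bmod4$, so $K$ has discriminant $4q$ and $2$ ramifies. I expect the $K$-Selmer group to pick up a class not coming from $\Q$, for example from the image of $T$ or of $-1$, $2$ or $q$ becoming trivial or identified. Under finiteness of $\Sha(E^\sigma)[2^\infty]$ this forces a point $P\in E(K)$ with $2P=A+B$ and $P\notin\operatorname{im}\phi$. Concretely, I would try to show that some $A+B$ has all descent images (for $\psi$ and its dual) trivial in $K^*/K^{*2}$, and hence is divisible by $2$ in $E(K)$, but not in $\operatorname{im}\phi$. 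The main obstacle is the local analysis at $2$ (ramified in (b), unramified in (a)) and at $q$. This is where the mod $20$ congruences must be used precisely, and where Sha enters in converting Selmer counts into actual points.
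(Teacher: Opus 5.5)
There is a genuine gap: the criterion you propose for surjectivity in (a) cannot work, and (b) is stated only as an expectation.

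\textbf{The positive-rank claim in (a) is fine.} Your route is genuinely different from the paper's. The root-number identity $w(E^\sigma)=\qr{-20}{q}\,w(E)=-1$ together with Monsky's $2$-parity theorem gives odd rank for $E^\sigma(\Q)$ once $\Sha(E^\sigma)[2^\infty]$ is finite, with no Selmer computation. The paper instead gets rank exactly $1$ from an explicit $2$-Selmer group.

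\textbf{The surjectivity claim in (a) does not follow from your criterion.} A dimension identity $\dim\mathrm{Sel}(E/K)=\dim\mathrm{Sel}(E/\Q)+\dim\mathrm{Sel}(E^\sigma/\Q)$ is not automatic for $2$-descent, and even if true it is blind to the cokernel $M$. Put $N:=\operatorname{im}\phi$ and suppose $P\notin N$. Then $P$ gives a new class in $E(K)/2E(K)$, but $2P$ is a nonzero class of $N/2N$ that dies there. In fact $|E(K)/2E(K)|=|N/2N|$ whatever $M$ is, because $2E(K)/2N\cong E(K)/N$. So there is no ``extra class'' contradiction, and what you must prove is \emph{injectivity}.

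Using $T\in E(\Q)\cap E^\sigma(\Q)$ and $E(\Q)\cong\Z/6$, a nonzero $M$ produces $B\in E^\sigma(\Q)\setminus 2E^\sigma(\Q)$ with $B\in 2E(K)$. Equivalently, some nonzero element of $\mathrm{Sel}^2(E^\sigma/\Q)$ is killed by restriction to $K$. For the full $2$-descent into $(\Q\oplus\Q(i))^\times$ modulo squares, that kernel is generated by $(q,1)$ and $(1,q)$. The paper's key step is an explicit local computation at $2$, $5$, $q$ and $\infty$ showing
$\mathrm{Sel}^2(E^\sigma/\Q)=\langle(5,-q+2qi),(q,qn_q)\rangle$, which meets this kernel trivially.

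\textbf{Your first-stage map cannot separate (a) from (b).} Consider the map $x+1$ with kernel $\{1,q\}$. In (a) the non-torsion class $(q,qn_q)$ has $\Q$-component $q$, and that component does die in $K$. For example, with $q=13$ the point $x=3$ on $13y^2=(x+1)(x^2+4)$ has $q(x+1)=52\equiv q$. So the ``collision caused by $q$ becoming a square'' occurs in both cases.

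What survives in (a) is the $\Q(i)$-component $n_q$, a Gaussian prime of norm $q$. It is not a square in $K(i)=\Q(i,\sqrt q)$. This needs $q\equiv1\bmod 4$, so that $q$ splits in $\Z[i]$. That splitting, not $2$ being unramified in $K$, is how the congruence mod $4$ enters, and no local analysis over $K$ is needed.

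\textbf{Part (b) is missing its key input.} That input is $\mathrm{Sel}^2(E^\sigma/\Q)=\langle(5,-1+2i),(1,q)\rangle$ for $q\equiv 11,19\bmod 20$. You also need that $\dim\Sha(E^\sigma)[2]$ is even when $\Sha(E^\sigma)[2^\infty]$ is finite, so that $(1,q)$ is the image of an actual point $B$. Since $(1,q)$ restricts trivially to $K$, we get $B\in 2E(K)$. On the other hand $B\notin 2N$, so $\phi$ is not surjective.
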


Combining this result with Theorems~\ref{thm:weaker},\ref{thm:stronger}, and observing that
$\qr{-20}{q} = -1$ if and only if $q \equiv 11, 13, 17, 19 \bmod 20$, we obtain:
\begin{cor}\label{cor:example}
Let $q$ be a prime such that $\qr{-20}{q} = -1$ and either $\qr{-1}{q} = 1$ or
$\qr{q}{3} = 1$.  Then every elliptic
curve over $\Q(\sqrt{q})$ with a rational cyclic subgroup of order $20$ has bad reduction at $3$.
\end{cor}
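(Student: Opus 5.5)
Here $E=X_0(20)$ and $q$ is a prime with $\qr{-20}{q}=-1$, so $q\equiv 11,13,17,19 \bmod 20$. The plan is to split according to the second hypothesis. In each case I will check how $3$ behaves in $K=\Q(\sqrt{q})$, and then feed the result into Theorem~\ref{thm:weaker} or Theorem~\ref{thm:stronger} with $p=3$. By Table~\ref{table:x0}, both theorems have their mod-$3$ hypotheses satisfied for $N=20$: every $\F_3$-point of $X_0(20)$ is the reduction of a cusp, and the $x$-line map is surjective on $\F_3$-points. These are finite checks already recorded there, and I take them as given. Also, the conclusion ``not potentially good reduction'' is stronger than ``bad reduction'', so it suffices.

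\emph{Case 1: $\qr{q}{3}=1$.} Then $q\equiv 1 \bmod 3$, so $3$ splits in $\Q(\sqrt q)$ and every prime $\p$ above $3$ has residue field $\F_3$. Theorem~\ref{thm:weaker} with $p=q_0=3$ applies directly. Hence every elliptic curve over $K$ represented by a $K$-point of $X_0(20)$ fails to have potentially good reduction at every prime above $3$. This case needs no condition modulo $20$ beyond $q\neq 3$. (If $q=3$ were allowed, $3$ would ramify and Theorem~\ref{thm:weaker} would still apply, but $\qr{-20}{3}=\qr{1}{3}=1$ excludes it anyway.)

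\emph{Case 2: $\qr{-1}{q}=1$ and $\qr{q}{3}\neq 1$.} Then $q\equiv 1\bmod 4$, and together with $\qr{-20}{q}=-1$ this forces $q\equiv 13,17\bmod 20$. Since $q\equiv 2\bmod 3$, the prime $3$ is inert in $K$. By Theorem~\ref{thm:surjectivity}(a), the map $E(\Q)\oplus E^\sigma(\Q)\to E(K)$ is surjective. All hypotheses of Theorem~\ref{thm:stronger} then hold with $p=3$: inertness, cusp representation modulo $3$, surjectivity on the $x$-line modulo $3$, and the Mordell--Weil decomposition. That theorem gives the conclusion.

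The main obstacle is really only Theorem~\ref{thm:surjectivity}(a), which I am allowed to assume here. The remaining delicate point is the bookkeeping linking the conditions:
\begin{itemize}
\item $\qr{-20}{q}=-1$ together with $q\equiv 1\bmod 4$ gives $q\equiv 13,17\bmod 20$, via $\qr{-20}{q}=\qr{-1}{q}\qr{5}{q}=\qr{-1}{q}\qr{q}{5}$;
\item $\qr{q}{3}=\pm1$ determines split versus inert at $3$.
\end{itemize}
I would also note that the sign of $q$ in $\Q(\sqrt q)$ causes no issue, since $q>0$ is prime.
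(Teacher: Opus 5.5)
Your proposal is correct and is the paper's argument. When $\qr{q}{3}=1$ the prime $3$ splits and Theorem~\ref{thm:weaker} applies; when $\qr{q}{3}=-1$ and $\qr{-1}{q}=1$ the prime $3$ is inert and $q\equiv 13,17 \bmod 20$, so Theorem~\ref{thm:surjectivity}(a) supplies the surjectivity needed for Theorem~\ref{thm:stronger}, with your extra bookkeeping via reciprocity only making explicit what the paper's two-line proof leaves implicit.
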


\begin{proof}
    If $\qr{q}{3} = 1$ then Theorem~\ref{thm:weaker} applies; if $\qr{q}{3} = -1$ and
    $\qr{-1}{q} = 1$ then Theorem~\ref{thm:stronger} applies.
\end{proof}

Theorem~\ref{thm:surjectivity}
is not really a new result, being a typical application of the theory of $2$-descent
on an elliptic curve over $\Q$; we include it to show that
Theorem~\ref{thm:stronger} is not vacuous.  Similar results could be proved for the other modular
curves appearing in Table~\ref{table:x0}.

Before proving this theorem, we start by establishing some notation and basic results for
$2$-descent on $E^\sigma$.  Our point of view is very close to that of \cite{schaefer}.
Let $q$ be a prime: since $E$ is defined by $y^2 = x^3+x^2+4x+4$,
the twist $E^\sigma$ is defined by $y^2 = x^3 + qx^2 + 4q^2x + 4q^3$.  
We use the standard $2$-descent map: let 
$A$ be the \'etale algebra $\Q[x]/(x^3+qx^2+4q^2x+4q^3) = \Q(\theta)$ and define
the map $\phi: E^\sigma(\Q) \to A^\times/{A^\times}^2$ by $\phi((x_0,y_0)) = x_0-\theta$ for 
most points.  We will use the isomorphism $A \cong \Q \oplus \Q(i)$ to rewrite the map
as $\phi((x_0,y_0)) \to (x_0+q,x_0+2qi)$.  For the point at infinity we have $\phi(0) = (1,1)$,
and at $(-q,0)$, where the first component is not defined, the condition of square norm
requires that the image be $(5,-q+2qi)$.  Further, the image of the map is supported at
the primes of bad reduction $2, 5, q$.

\begin{propdefn} For $q$ a prime congruent to $13$ or $17 \bmod 20$, let $n_q$ be an element
of $\Z[i]$ of norm $q$ such that
$$N(n_q) = q, \quad n_q \equiv 1 \bmod 2, \quad n_q \equiv \pm 1 \bmod 2+i.$$
This element is unique up to sign, and therefore up to squares.
\end{propdefn}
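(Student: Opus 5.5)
We need to prove existence of $n_q\in\Z[i]$ with $N(n_q)=q$, $n_q \equiv 1 \bmod 2$, and $n_q\equiv \pm1 \bmod 2+i$, and that it is unique up to sign.

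\emph{Existence.} Since $q\equiv 1\bmod 4$, $q$ splits in $\Z[i]$, so $q=\pi\bar\pi$ with $\pi$ prime. The eight elements of norm $q$ are $u\pi$ and $u\bar\pi$ for the four units $u\in\{\pm1,\pm i\}$.

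We first impose the condition mod $2$. As $q$ is odd, $\pi=a+bi$ has $a\not\equiv b \bmod 2$. The statement ``$n_q\equiv 1\bmod 2$'' means $n_q\equiv 1 \bmod 2\Z[i]$, i.e.\ $a$ odd and $b$ even. Multiplying by $i$ swaps the parity pattern, so exactly two of the four associates of $\pi$ satisfy it, namely $\pm\pi_0$ for a suitable $\pi_0$. The conjugates $\pm\bar\pi_0$ satisfy it as well. This leaves four candidates: $\pm\pi_0$ and $\pm\bar\pi_0$.

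Next we impose the condition mod $2+i$. The ideal $(2+i)$ has residue field $\F_5$, with $i\equiv -2\equiv 3$, which has order $4$ in $\F_5^\times$. Write $\pi_0\equiv c \bmod 2+i$ and $\bar\pi_0\equiv c' \bmod 2+i$; note that reducing $\bar\pi_0$ mod $2+i$ amounts to reducing $\pi_0$ mod $2-i$. Then
\[
cc' \equiv \pi_0\bar\pi_0 = q \bmod 2+i, \qquad q\equiv 3 \text{ or } 2 \bmod 5 .
\]
The hypothesis $q\equiv 13,17 \bmod 20$ therefore says exactly that $q$ is a non-square mod $5$, i.e.\ $cc'$ lies in the nontrivial coset of $\{\pm1\}$ in $\F_5^\times$. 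Since $\F_5^\times/\{\pm1\}$ has order $2$, exactly one of $c,c'$ is $\equiv\pm1$. Hence exactly one of $\{\pm\pi_0\}$, $\{\pm\bar\pi_0\}$ satisfies the second condition. This gives existence, and uniqueness up to sign.

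\emph{Uniqueness up to squares.} Since $-1=i^2$ is a square in $\Q(i)$, elements that differ by a sign differ by a square.

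The only delicate step is correctly reading ``$\equiv1\bmod 2$'' as congruence in $\Z[i]$ modulo the ideal $(2)$, and confirming that the unit $i$ both fixes the mod-$2$ class of the pair $\{\pm\pi_0\}$ and separates the mod-$(2+i)$ classes as described. This is a finite check.
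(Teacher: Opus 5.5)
Your proof is correct and follows the same route as the paper: the mod-$2$ condition cuts the eight elements of norm $q$ down to the four elements $\pm a \pm bi$ with $b$ even. Then $(a+bi)(a-bi)=q\equiv 2,3 \bmod 2+i$ forces exactly one of $a\pm bi$ to be $\equiv\pm1 \bmod 2+i$. You add a justification the paper leaves implicit, namely that $\{\pm1\}$ is exactly the subgroup of squares in $\F_5^\times$, so a non-square product forces exactly one factor into $\{\pm1\}$. (In your closing remark, the unit that preserves the mod-$2$ class is $-1$, not $i$.)
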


\begin{proof}
    The elements of norm $q$ are of the form $\pm a \pm bi, \pm b \pm ai$ where $a^2+b^2 = q$.
    We take $b$ to be even: then only the $\pm a \pm bi$ are $1 \bmod 2$.  Further, since
    $(a+bi)(a-bi) = q \equiv 2, 3 \bmod 2+i$, exactly one of $a+bi$ and $a-bi$ is $\pm 1 \bmod 2+i$.
\end{proof}

\begin{lem}\label{lem:cover}
Let $(a,b+ci) \in \Q^\times \oplus \Q(i)^\times$.  Then $(a,b+ci)$ is in the image of the $2$-descent map
if and only if the projective closure $C_{a,b+ci}$ of the curve in $\A^3$ defined by
$$2bst+c(s^2-t^2) = 2q, \quad ar^2-b(s^2-t^2)+2cst = q$$
has rational points.
\end{lem}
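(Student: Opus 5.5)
This is the standard description of the fibres of the $2$-descent map as $2$-coverings, specialised to $E^\sigma$ and the splitting $A\cong \Q\oplus\Q(i)$. The plan is to show that $(a,b+ci)$ is in the image exactly when the system
$$x-\theta = \delta\, w^2,\qquad \delta=(a,b+ci),\quad w\in A,$$
has a solution with $x\in\Q$. We then write this system out in coordinates and identify it with $C_{a,b+ci}$.

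First we check the factorisation: $x^3+qx^2+4q^2x+4q^3 = (x+q)(x^2+4q^2)$. So under $A\cong\Q\oplus\Q(i)$ the element $x-\theta$ goes to $(x+q,\,x+2qi)$, as stated in the paper. (Any sign conventions are harmless, since $-1$ and $i$ differ from their inverses by squares.)

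For the forward direction, suppose $\phi(P)=(a,b+ci)$ with $P=(x_0,y_0)$ not $2$-torsion. Then there are $r\in\Q$ and $s,t\in\Q$ with
$$x_0+q = a r^2,\qquad x_0+2qi = (b+ci)(s+ti)^2 .$$
Expanding $(b+ci)(s^2-t^2+2sti)$ and equating real and imaginary parts gives
$$x_0 = b(s^2-t^2)-2cst,\qquad 2q = c(s^2-t^2)+2bst .$$
The second equation is the first equation of $C_{a,b+ci}$. Eliminating $x_0$ between the real part and $x_0+q=ar^2$ gives $ar^2-b(s^2-t^2)+2cst=q$, the second equation. So $(r,s,t)$ is a rational point.

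For the converse, start from a rational point $(r,s,t)$ of $C_{a,b+ci}$ and define $x_0=ar^2-q$. The two equations say exactly that $x_0+q=ar^2$ and $x_0+2qi=(b+ci)(s+ti)^2$. Multiplying, $x_0^3+qx_0^2+4q^2x_0+4q^3 = a\,N(b+ci)\,(r\,N(s+ti))^2$. The image of the descent map satisfies the square-norm condition, so we may take $a\,N(b+ci)$ to be a square. (If $a\,N(b+ci)$ is not a square, we need to check that the curve has no points; this can be read off from the same norm identity, since it forces the right-hand side to be a nonzero square times a non-square.) Hence $x_0$ gives a point of $E^\sigma(\Q)$ whose image is $(a,b+ci)$.

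The main obstacle is the bookkeeping for degenerate cases. The point at infinity and the $2$-torsion point $(-q,0)$ must correspond to the points at infinity of the projective closure $C_{a,b+ci}$ or to points with $r=0$. For example, $(5,-q+2qi)$ should come from a point with $r=0$, and then the first component $a=5$ has to be handled via the norm condition rather than via $x_0+q$. I would handle these directly. The points at infinity are solutions of $2bst+c(s^2-t^2)=0$ and $ar^2-b(s^2-t^2)+2cst=0$. Over $\Q$ these correspond to the identity, which maps to $(1,1)$. The cases $r=0$ should be checked to give exactly the special value at $(-q,0)$.
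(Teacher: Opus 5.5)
Your strategy is the paper's own: expand $x_0+2qi=(b+ci)(s+ti)^2$, separate real and imaginary parts, eliminate $x_0$ using $x_0+q=ar^2$, and treat $O$ and $(-q,0)$ separately. The forward direction is fine. The problem is the converse.

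The sentence ``the image of the descent map satisfies the square-norm condition, so we may take $aN(b+ci)$ to be a square'' is circular when you are trying to prove that $(a,b+ci)$ lies in the image. Your fallback claim, that $C_{a,b+ci}$ has no rational points when $aN(b+ci)\notin\Q^{\times 2}$, is false. The identity $(x_0+q)(x_0^2+4q^2)=aN(b+ci)\,(rN(s+ti))^2$ gives no contradiction, because nothing forces the left side to be a square. It only says that $(x_0, rN(s+ti))$ lies on the quadratic twist $aN(b+ci)\,y^2=(x+q)(x^2+4q^2)$, which in general has rational points. Concretely:
\begin{itemize}
\item $(r,s,t)=(1,1,0)$ lies on $C_{x_0+q,\,x_0+2qi}$ for every $x_0\in\Q\setminus\{-q\}$; for example it lies on $C_{q,2qi}$, whose norm is $4q^3$.
\item $(0,1,0)$ lies on $C_{a,-q+2qi}$ for every $a\in\Q^\times$.
\end{itemize}
Yet the image of the descent map lies in the kernel of the norm $A^\times/A^{\times2}\to\Q^\times/\Q^{\times2}$, since $N(x_0-\theta)=y_0^2$. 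So the lemma is false as literally stated, without a norm hypothesis, and no argument can remove that hypothesis.

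The repair is to assume that $aN(b+ci)$ is a square. This is how the paper actually uses the lemma: its proof tacitly assumes it, and the classes it is applied to (e.g.\ those generated by $g_1,\dots,g_6$) all have square norm. With this hypothesis your converse goes through. Put $m^2=aN(b+ci)$ and consider the three cases.
\begin{itemize}
\item If $r\neq0$, then $(x_0,\,m\,r\,N(s+ti))\in E^\sigma(\Q)$ with $x_0=ar^2-q$, and its image is $(ar^2,(b+ci)(s+ti)^2)\equiv(a,b+ci)$.
\item If $r=0$, the point is $(-q,0)$ and $b+ci\equiv-q+2qi$. The norm condition then gives $a\equiv N(b+ci)\equiv N(-q+2qi)=5q^2\equiv5$ modulo squares.
\item At infinity, the two homogenized equations say $(b+ci)(s+ti)^2=ar^2$ with $r$ and $s+ti$ both nonzero. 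Hence $b+ci\equiv a$ in $\Q(i)^\times/\Q(i)^{\times2}$, and the norm condition forces $a\in\Q^{\times2}$, so the class is trivial.
\end{itemize}
Incidentally, this makes your degenerate-case bookkeeping more precise than the paper's sentence about points at infinity. Points at infinity occur only for the image of $O$, while $(-q,0)$ corresponds to the affine point $(0,1,0)$ of $C_{5,-q+2qi}$.
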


\begin{proof}
The condition for $(a,b+ci)$ to be in the image is that there exist $x_0 \in \Q$ with
$x_0+q = ar^2$ and $x_0+2qi = (b+ci)(s+ti)^2$.  Expanding the second equation and eliminating
$x_0$ produces the equations indicated, while it is easily verified that there is a point at
infinity if and only if $(a,b+ci)$ is in the image of the $2$-torsion subgroup.
\end{proof}

\begin{dfn}\label{def:selmer}
The {\em Selmer group} of $E^\sigma$ (for multiplication by $2$) is the subgroup of
$(a,b+ci) \in \Q^\times \oplus \Q(i)^\times$ mod squares
for which $C_{a,b+ci}$ has points everywhere locally.
\end{dfn}

We now proceed to the proof of Theorem~\ref{thm:surjectivity}.
\begin{proof}
{\em First case: $q \equiv 13, 17 \bmod 20$.}  Since $q \equiv 1 \bmod 4$, the subgroup
of $A^\times/{A^\times}^2$ is generated by $(1,i),(2,1+i),(5,2+i),(1,5),(q,n_q),(1,q)$,
which will be denoted $g_1, \dots, g_6$.  We show that the Selmer group is generated by
$g_1g_3g_6, g_5g_6$.  Note that the image of the $2$-torsion point is $(5,-q+2qi) = g_1g_3g_6$,
so this is automatically in the Selmer group.

First we prove that $C_{a,b+ci}$ has $2$-adic points if and only if 
$(a,b+ci) \in \langle g_1g_3,g_4,g_5,g_6 \rangle$.  For $g_4$ there are points with $s = 1/2$,
and for $g_6$ there are points with $s = 1/2$ if $q \equiv 5 \bmod 8$ and with $s = 1$ if
$q \equiv 1 \bmod 8$.  Instead of $g_5$ we will use $g_5g_6$, giving a curve defined by
$$r^2-d(s^2-t^2)+2est-1 = 2dst+e(s^2-t^2)-2 = 0,$$ where $n_q = d+ei$.  A tedious but elementary
calculation shows that in all cases with $d+ei \equiv 1 \bmod 2$
there is a $2$-adic point either at infinity or with one
of the variables equal to one of $0, 1, 2, 3, 1/2, -1$.

To show ``only if", we prove that there are no $2$-adic points for $g_1,g_2,g_1g_2$, these
being the nontrivial coset representatives for the known group.
In all cases it suffices to check mod $4$.

Proceeding to $q=5$, we show that $C_{a,b+ci}$ has $5$-adic points if and only if
$(a,b+ci) \in \langle g_1g_6,g_2g_6,g_3,g_5g_6 \rangle$.
We already have $g_1g_3g_6$, so we only check $g_1g_6,g_2g_6, g_5g_6$.
In the first two cases, there are $5$-adic points with $s=t$, $t=1$.  In the last, reducing mod
$5$ gives a union of two conics (this depends on the condition $d+ei \equiv \pm1 \bmod 2+i)$.
The intersection points are singular, but there are still smooth points, and these lift to
$\Q_5$.

Again we show that the coset representatives $g_1,g_4,g_1g_4$ give curves that are not
locally solvable at $5$.  For $g_1$ it suffices to work mod $5$; for the other two, 
mod $25$ is enough.

Thus the $2$-Selmer group is contained in the intersection of these two groups, which is
$\langle g_1g_3g_6,g_5g_6 \rangle$.  As above $g_1g_3g_6$ is the image of the $2$-torsion point,
so it is in the Selmer group, and it suffices to show that the curve corresponding to $g_5g_6$
is locally solvable at $q$ and $\infty$.  There are certainly real points, since
$(q,qn_q)$ is a square in $\R \oplus \C$.  To see that there are $q$-adic points,
it suffices to observe that the reduction mod $q$ is the union of four lines.  Let
$i = d/e \bmod q$, where as before $n_q = d+ei$.  Then the lines are
$r = \pm \sqrt{1+2i}, s + it = \pm \sqrt{2/e}$,
and every point that is on exactly one of the lines will lift.  To see that $2/e$ is a
square mod $q$, we use the rational biquadratic reciprocity law: using $[]$ for the
biquadratic residue symbol, we have
$$\left[\frac{-4ei}{d+2ei}\right] = \left[\frac{d-2ei}{d+2ei}\right] = \left[\frac{-2}{d+2ei}\right]\cdot (-1)^{(d^2-1)/8}$$
where the second equality follows from \cite[Exercises 9.27--30]{ireland-rosen}.
Squaring both sides gives $\left(\frac{-4ei}{q}\right) = \left(\frac{-2}{q}\right)$.  However,
$(-4i)/(-2) = (1+i)^2$ is a square, so $\left(\frac eq\right)=1$.

This completes the proof that the $2$-Selmer group is $\langle g_1g_3g_6,g_5g_6 \rangle$.
Under the assumption that the $2$-part of $\Sha$ is finite, this implies that $E^\sigma$ has
rank $1$.  The cokernel of the map $E(\Q) \oplus E^\sigma(\Q) \to E(\Q(\sqrt{q}))$ is
$2$-torsion, so if it is nonzero then a point of $E^\sigma(\Q)$ must become a multiple of
order $2$ in $E(\Q(\sqrt{q}))$.  But this would imply that the map on $2$-Selmer groups is
not injective.

The argument in the second case is similar but simpler, because we do not have to consider the
factorization of $q$ in $\Z[i]$.  The result is that the Selmer group of $E^\sigma$ is
generated by $(5,-1+2i)$ and $(1,q)$.  Assuming that $\Sha[2^\infty]$ is finite, this implies
that there is a generator mapping to $(1,q)$.  Such a generator will be a multiple of $2$
in $E(\Q(\sqrt{q}))$, implying that the map is not surjective.
\end{proof}

\end{document}